\newtheorem{theorem}{Theorem}[section] 
\newtheorem{lemma}[theorem]{Lemma}%
\def\Aut{{\rm Aut}}
\def\PSp{{\rm PSp}}
\newcommand{\V}{\mathrm{V}}
\newcommand{\E}{\mathrm{E}}
\def\qed{\ifmmode\square\else\nolinebreak\hfill $\square$\fi\par\vskip12pt}
\theoremstyle{definition}
\date{}
\begin{document}
\baselineskip15pt 
\parskip 2pt

\title{Edge-transitive graphs of small order and the answer to a 1967 question by Folkman}

\author{
Marston Conder
\\[+2pt] 
{\normalsize Department of Mathematics, }\\[-3pt] 
{\normalsize University of Auckland,}\\[-3pt] 
{\normalsize Private Bag 92019, Auckland 1142, New Zealand} \\[+2pt] 
{\normalsize Email: m.conder@auckland.ac.nz} 
\\[+12pt] 
Gabriel Verret
\\[+2pt] 
{\normalsize Department of Mathematics, }\\[-3pt] 
{\normalsize University of Auckland,}\\[-3pt] 
{\normalsize Private Bag 92019, Auckland 1142, New Zealand} \\[+2pt] 
{\normalsize Email: g.verret@auckland.ac.nz} 
}

\maketitle

\begin{abstract}
In this paper, we introduce a method for finding all edge-transitive graphs of small order, 
using faithful representations of transitive permutation groups of small degree, 
and we explain how we used this method to find all edge-transitive graphs of order up to $47$, and 
all bipartite edge-transitive graphs of order up to $63$. 
We also give an answer to a 1967 question of Folkman about semi-symmetric 
graphs of large valency; in fact we show that for semi-symmetric graphs  of order $2n$ 
and valency $d$, the ratio $d/n$ can be arbitrarily close to $1$.
 ${}$ \\[+10pt]  
{\bf Mathematics Subject Classification\/}: 05E18 (primary), 05C25, 20B25
\end{abstract}

\section{Introduction} 
\label{sec:intro}

Graphs with a large automorphism group hold a significant place in mathematics, 
dating back to the time of first recognition of the Platonic solids, and also now in other 
disciplines where symmetry (and even other properties such as rigidity) play an important role, 
such as fullerene chemistry, and interconnection networks.

A major class of such graphs are the \emph{vertex-transitive} graphs, whose automorphism group 
has a single orbit on vertices.  Vertex-transitive graphs include many famous examples 
such as the Petersen graph and the Coxeter graph, and the underlying graph of the $C_{60}$ molecule, 
as well as infinite families including 
circulants, complete graphs, complete bipartite graphs $K_{n,n}$, generalised Petersen graphs, 
regular trees, and so on.  
Important sub-classes are those of \emph{Cayley graphs} (graphs for which  
some group of automorphisms acts sharply-transitively on vertices), and \emph{arc-transitive graphs}, 
which include the graphs underlying regular maps on surfaces, and the somewhat less well known 
class of \emph{half-arc-transitive graphs}, which are vertex- and edge-transitive but not arc-transitive. 


There is a close relationship between vertex-transitive graphs of given order $n$ and transitive permutation groups of degree $n$.  For if $X$ is a vertex-transitive graph of order $n$ with automorphism group $A$, and $\Delta$ is the neighbourhood of some vertex $v$, then $\Delta$ is preserved by $A_v$ and so $\Delta$ is a union of orbits of $A_v$. Conversely, let $G$ be any transitive group on a set $\Omega$ of size $n$.  Then for each union $\Delta$ of orbits of the stabiliser $G_v$ of a point $v \in \Omega$, one may define a graph on the set $\Omega$ with edges taken as the pairs of the form $\{v^g,w^g\}$ where $w \in \Delta$ and $g \in G$, and then $G$ is a subgroup of the automorphism group of $X$, acting transitively on vertices.  Hence all vertex-transitive graphs of $n$ can be found by constructing these  graphs for all possible choices of the pair $(G,\Delta)$, and checking for isomorphisms between them.

Using this method and the library of all transitive groups of degree at most $32$, Gordon Royle found all vertex-transitive graphs of order at most $32$ (see \cite{Royle-VTcensus31,Royle-VTcensus32}). Now that all transitive groups of degree  $33$ to $47$ are known (see \cite{Holt}), all vertex-transitive graphs of order up to $47$ can be found using the same method.  Currently this is a hard limit, in that the transitive groups of degree $48$ have not yet been determined. 

If the valency is  small, this process can be taken much further. 
All $3$-valent arc-transitive graphs of order up to 10000 
were found by the first author~\cite{C}, by exploiting the amalgams associated with such 
graphs \cite{DM80}, thereby considerably extending the `Foster census'. 
More recently, Primo{\v z} Poto\v cnik, Pablo Spiga and the second author determined all $3$-valent 
vertex-transitive graphs of order up to $1280$,  all 4-valent arc-transitive graphs of order up to $640$~\cite{PSV-cubicVT}, and  all $4$-valent half-arc-transitive graphs of order up to $1000$~\cite{PSV-HATval4}. 

In contrast, relatively little is known about  graphs that are edge-transitive but not vertex-transitive.  There is not even a good name for these graphs, and yet they include many infinite families of well-known examples including complete bipartite graphs $K_{m,n}$ with $m \ne n$ and, more generally, the incidence graphs of flag-transitive discrete structures 
(such as certain block designs and finite geometries).  If the graph is edge-transitive and also regular, 
but not vertex-transitive, then it is called \emph{semi-symmetric}, and a little more is known about these graphs, 
although mainly in the $3$-valent case (see \cite{Goldschmidt,CMMP} for example). 

In 2017, Brendan McKay asked us about finding all edge-transitive graphs  
of up to a certain order. In response, we developed a new method for solving this problem, which we describe in Section~\ref{sec:approach}.  Since a method already exists for vertex-transitive graphs, we focus on graphs that are edge-transitive but not vertex-transitive. Such graphs are necessarily bipartite. In fact, our new method finds all bipartite edge-transitive graphs up to a given order.  A key part of  it involves a reduction to the `worthy' case (where no two vertices have exactly the same neighbours), together with a `blow-up' construction to obtain all unworthy examples as well.  

Using this approach, we were able to determine all bipartite edge-transitive graphs of order up to $63$. Combining this with a list of vertex-transitive graphs yields a complete list of all edge-transitive graphs of order up to $47$. We describe the implementation of our methods and the results of computations in Sections~\ref{sec:implementation} and~\ref{sec:results}. 
The graphs themselves are listed on the websites indicated in references~\cite{CV1} and~\cite{CV2},  
and their edge-sets are given on websites subsidiary to those two.  (In fact, we found and give only connected graphs, but it is very easy to find the disconnected ones from these.)

While we began to write up and announce these results,  Heather Newman, Hector Miranda and Darren Narayan determined all edge-transitive graphs on up to $20$ vertices~\cite{NMN}.  We also discovered that some of our graphs answer one of the questions posed by Folkman in 1967 about semi-symmetric graphs of large valency~\cite{Folkman}. Inspired by these, in Section~\ref{sec:Folkman} we construct two infinite families of edge-transitive graphs which show that for semi-symmetric graphs of order $2n$ and valency $d$, the ratio $d/n$ can be arbitrarily close to~$1$.

\section{Further background} 
\label{sec:prelims}

In this paper, all graphs are assumed to be simple (with no loops or multiple edges) and undirected, and unless otherwise specified, also connected.  The vertex-set and edge-set of a graph $X$ will be denoted by $\V(X)$ and $\E(X)$,  respectively, and the neighbourhood of a vertex $v$ of $X$ by $X(v)$.  Also we use $\Aut(X)$ to denote the \emph{automorphism group} of $X$, namely the group of all permutations of the vertex-set that preserve adjacency. 

Next, we explain the `blow-up' construction, and define worthy and unworthy graphs.  

Let $Y$ be a bipartite graph, with parts $U$ and $W$, say, and let $(k,m)$ be a pair 
of positive integers.  The $(k,m)$-{\em blow-up} of $Y$ is obtained from $Y$ by replacing 
every vertex $u$ in $U$ by $k$ new vertices $u_1,u_2,\dots,u_k$ and every vertex $w$ in $W$ 
by $m$ new vertices $w_1,w_2,\dots,w_m$, and every edge $\{u,w\} \in U \times W$ by $km$ edges 
of the form $\{u_i,w_j\}$ for $1 \le i \le k$ and $1 \le j \le m$ (or in other words, by the edges of a complete 
bipartite subgraph isomorphic to $K_{k,m}$ between $\{u_1,u_2,\dots,u_k\}$ and $\{w_1,w_2,\dots,w_m\}$).   
For example, the graph $K_{k,m}$  is a $(k,m)$-blow-up of the $2$-vertex graph $K_{1,1}$, 
and is also a $(k,1)$-blow up of $K_{1,m}$, and a $(1,m)$-blow-up of $K_{k,1}$. 

Given a graph $X$, we can define an equivalence relation $\sim$ on its vertex-set by letting $v \sim w$ if and only if $v$ and $w$ have the same neighbourhood in $X$. Following~\cite{Wilson}, we say that $X$ is \emph{worthy} if $\sim$ is the identity relation (or in other words, if distinct vertices have distinct neighbourhoods).  Otherwise, $X$ is \emph{unworthy}.
Examples of unworthy graphs include the complete bipartite graphs $K_{m,n}$ with $m$ or $n$ at least $2$.  
Now let $X/{\hskip -3pt}\sim$ be the quotient graph obtained by collapsing every equivalence class to a single vertex. It is an easy exercise to show that $X/{\hskip -3pt}\sim$ is worthy, that $X/{\hskip -3pt}\sim$ is bipartite if only $X$ is bipartite, and that $X/{\hskip -3pt}\sim$ is edge-transitive if and only $X$ is edge-transitive. Finally, it is easy to see that $X$ is a blow-up of $X/{\hskip -3pt}\sim$. 

Combining all of this, we have the following.

\begin{lemma}
\label{lemma:blow-ups} 
Amongst edge-transitive bipartite connected graphs, every graph is the blow-up of a worthy one.
\end{lemma}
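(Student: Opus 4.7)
The plan is, starting from an edge-transitive bipartite connected graph $X$ with parts $U$ and $W$, to form the quotient $X/{\sim}$ and then exhibit $X$ as a $(k,m)$-blow-up of it for suitable $(k,m)$. Taking for granted the four easy exercises flagged in the paragraph immediately preceding the lemma --- namely that $X/{\sim}$ is worthy, bipartite (since $X$ is), edge-transitive (since $X$ is), and that $X$ arises from $X/{\sim}$ by blowing each vertex of $X/{\sim}$ up to its corresponding $\sim$-class in $X$ --- the remaining task is just to check that the resulting blow-up satisfies the constant-multiplicity condition required by the definition of $(k,m)$-blow-up, namely that all $\sim$-classes contained in a single bipartition part of $X$ have the same size.

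For each vertex $v$ of $X$, write $\mu(v) := |[v]|$ for the size of its $\sim$-class; this descends to a well-defined function on vertices of $X/{\sim}$. For any two edges $\{u_1,w_1\}$ and $\{u_2,w_2\}$ of $X$, edge-transitivity supplies an automorphism of $X$ sending one to the other, and since automorphisms automatically preserve the relation $\sim$, they send $\sim$-classes to $\sim$-classes of the same size. Hence the multiset $\{\mu(u),\mu(w)\}$ of class-sizes at the endpoints of an edge $\{u,w\}$ is the same for every edge of $X$, so there exist positive integers $a$ and $b$ with $\{\mu(u),\mu(w)\}=\{a,b\}$ for every such edge. Passing this to the quotient, every edge of $X/{\sim}$ joins a vertex of $S_a := \mu^{-1}(a)$ to a vertex of $S_b := \mu^{-1}(b)$, so $(S_a,S_b)$ is itself a bipartition of $X/{\sim}$.

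Because $X/{\sim}$ is connected (which follows immediately from connectedness of $X$ by projecting paths) and bipartite, its bipartition is unique as an unordered pair, so $\{S_a,S_b\}$ coincides with the inherited bipartition $\{U',W'\}$ of $X/{\sim}$. Writing $k$ and $m$ for the common multiplicities on the two parts (in the appropriate order), the definition of $\sim$ then guarantees that each edge of $X/{\sim}$ lifts to a complete bipartite subgraph $K_{k,m}$ between the two associated $\sim$-classes of $X$, which is exactly what is needed for $X$ to be the $(k,m)$-blow-up of the worthy graph $X/{\sim}$. The only genuinely non-routine step in this programme is the constancy of multiplicity within each part, which rests on the uniqueness of the bipartition for a connected bipartite graph; everything else is definition-chasing or cited directly from the exercises preceding the lemma.
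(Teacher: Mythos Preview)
Your argument is correct and aligns with the paper's approach, which simply invokes the ``easy exercises'' in the paragraph preceding the lemma and concludes without further detail. You are in fact more thorough than the paper in isolating and verifying the constant-multiplicity condition required by the definition of a $(k,m)$-blow-up; the only minor wrinkle is the case $a=b$, where $(S_a,S_b)$ is not literally a bipartition of $X/{\sim}$, but in that case $\mu$ is globally constant and the conclusion is immediate.
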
 

\smallskip
We will also need the following easy lemmas.
\smallskip

\begin{lemma}
\label{worthy->faithful}
Let $X$ be a bipartite graph, and let $G$ be the part-preserving subgroup of $\Aut(X)$. 
If $X$ is worthy, then $G$ acts faithfully on each part of $X$.
\end{lemma}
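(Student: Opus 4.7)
The plan is to prove the contrapositive flavour directly: take any element $g$ in the kernel of the action on one part and show it must be trivial, by using worthiness to push the fixing from one part to the other.

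First, denote the two parts of $X$ by $U$ and $W$. Since $G$ preserves the bipartition, for each $g \in G$ we get induced permutations $g_U$ of $U$ and $g_W$ of $W$, and these determine $g$. By symmetry of the argument it suffices to show that the kernel of the action on $U$ is trivial. So suppose $g \in G$ fixes every vertex of $U$ pointwise; I claim $g$ also fixes every vertex of $W$ pointwise.

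To see this, pick any $w \in W$. Because $X$ is bipartite, the neighbourhood $X(w)$ is contained in $U$, and hence is fixed pointwise by $g$. Since $g$ is an automorphism, $X(g(w)) = g(X(w)) = X(w)$. Now $g(w)$ lies in $W$ (as $g$ preserves the parts), so $g(w)$ and $w$ are two vertices of $X$ with exactly the same neighbourhood. Worthiness of $X$ then forces $g(w) = w$, as required.

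There is no real obstacle here; the only thing to be careful about is ensuring that the two vertices being compared actually lie on the same side of the bipartition so that the worthiness hypothesis can legitimately be invoked, which is immediate from the fact that $G$ is the part-preserving subgroup of $\Aut(X)$.
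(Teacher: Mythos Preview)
Your proof is correct and follows essentially the same approach as the paper's: both pick an element $g$ fixing one part $U$ pointwise, observe that for any $w$ in the other part the neighbourhood $X(w)\subseteq U$ is fixed setwise, so $w$ and $g(w)$ have the same neighbourhood, and then invoke worthiness. The only cosmetic difference is that the paper phrases it as a contradiction (assume $g$ moves some $w$) whereas you argue directly that $g$ fixes every $w$.
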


\begin{proof}
Suppose to the contrary that $G$ does not act faithfully on one of the parts, say $U$.  Then some non-trivial element $g$ of $G$ fixes every vertex of $U$, but moves some vertex in the other part, say $w$.  But then since $X$ is bipartite and $g$ fixes $U$ pointwise, it follows that $w$ and $w^g$ have the same neighbourhood, contradicting the hypothesis that $X$ is worthy.
\end{proof}

\begin{lemma}
\label{lemma:ETnotVT}
A graph that is edge-transitive but not vertex-transitive is bipartite.
\end{lemma}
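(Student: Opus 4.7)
My plan is to use orbits of $\Aut(X)$ on vertices together with connectedness. Let $A = \Aut(X)$, and since $A$ is edge-transitive but not vertex-transitive, $A$ has at least two orbits on $\V(X)$. Pick any edge $\{u,v\} \in \E(X)$ and let $\mathcal{O}_1, \mathcal{O}_2$ be the $A$-orbits containing $u$ and $v$ respectively. Since orbits are $A$-invariant, for any $g \in A$ the image $\{u^g, v^g\}$ has one endpoint in $\mathcal{O}_1$ and one in $\mathcal{O}_2$ (as an unordered pair of orbits). By edge-transitivity, every edge of $X$ therefore has its endpoints lying in $\mathcal{O}_1 \cup \mathcal{O}_2$, with either one endpoint in each orbit (if $\mathcal{O}_1 \neq \mathcal{O}_2$) or both endpoints in $\mathcal{O}_1$ (if $\mathcal{O}_1 = \mathcal{O}_2$).

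The next step is a case split. If $\mathcal{O}_1 = \mathcal{O}_2$, then no edge of $X$ has an endpoint outside $\mathcal{O}_1$. Since $X$ is connected (by the standing convention stated in Section~\ref{sec:prelims}) and $A$ has more than one vertex-orbit, this forces every vertex to lie in $\mathcal{O}_1$, contradicting non-vertex-transitivity. Hence $\mathcal{O}_1 \neq \mathcal{O}_2$, and every edge has exactly one endpoint in each of these two orbits. Connectedness then implies $\V(X) = \mathcal{O}_1 \cup \mathcal{O}_2$, so $(\mathcal{O}_1, \mathcal{O}_2)$ is a bipartition of $X$, as required.

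The argument is genuinely short, and I do not expect a serious obstacle; the only subtle point is making sure connectedness is invoked to rule out the degenerate possibility of an edgeless component sitting alongside a vertex-transitive one, and to show in the $\mathcal{O}_1 = \mathcal{O}_2$ case that the whole vertex-set must collapse to one orbit. Both uses are immediate from the standing assumption that graphs in the paper are connected.
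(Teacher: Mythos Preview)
Your proof is correct and follows essentially the same approach as the paper's: both take an edge $\{u,v\}$, observe that by edge-transitivity every edge (hence, by connectedness, every vertex) lies in the $A$-orbit of $u$ or of $v$, and rule out the possibility that these orbits coincide since that would force vertex-transitivity. Your version is simply more explicit about the case split and the role of connectedness, which the paper leaves implicit.
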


\begin{proof}
Let $X$ be such a graph, and let $A = \Aut(X)$.  If $\{v,w\}$ is any edge of $X$, then since $X$ is edge-transitive, every vertex lies in the $A$-orbit of one of $v$ and $w$, but not both (for otherwise $X$ would be vertex-transitive).  It follows that $X$ is bipartite, with its parts being those two $A$-orbits.  
\end{proof} 

\begin{lemma}
\label{lemma:automppties} 
Let $X$ be a edge-transitive bipartite connected graph, and let $G$ be the part-preserving subgroup of $\Aut(X)$. Then $G$ is transitive on each part of $X$, and either 
\\[+4pt] 
{\rm (a)}  $G_v$ is transitive on $X(v)$ for all $v \in \V(X)$, and on $\E(X)$, and $G=\langle G_v,G_w\rangle$ for all $\{v,w\} \in \E(X)$,  or
\\[+4pt] 
{\rm (b)}  $X$ is half-arc-transitive, $G_v$ has two orbits of equal size on $X(v)$, for all $v \in \V(X)$, and $G$ has two orbits on $\E(X)$, with representatives $\{v,w\}$ and $\{v,w'\}$ where $w$ and $w'$ are representatives for the two orbits of $G_v$ on $X_v$.
\end{lemma}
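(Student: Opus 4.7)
Plan: I set $A = \mathrm{Aut}(X)$ and note that $[A : G] \in \{1, 2\}$; if $[A:G]=2$, any $\sigma \in A\setminus G$ swaps the parts $U, W$, in which case $X$ is $A$-vertex-transitive (use $\sigma$ and edge-transitivity to move any vertex to any other). My strategy is to first prove $G$-transitivity on each part, then split on whether $G_v$ is transitive on $X(v)$, which corresponds precisely to the conclusions (a) and (b).

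For the transitivity on each part: if $G = A$, $A$-edge-transitivity plus part-preservation immediately yields $G$-transitivity on both $U$ and $W$, since any edge-mapping element fixes the part of each endpoint. If $G \ne A$, the $G$-orbits of the vertex-transitive group $A$ form a system of two blocks of equal size, each contained in one part; since $\sigma$ exchanges the parts, each part must be a single $G$-orbit.

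Next I establish the pivotal equivalence: $G_v$ is transitive on $X(v)$ for every $v$ if and only if $G$ is edge-transitive. The forward direction combines $G$-transitivity on $U$ with local transitivity at $v$; the converse uses that any $g \in G$ sending $\{v, w_1\}$ to $\{v, w_2\}$ must fix $v$, since it preserves the parts. Case~(a) is characterized by this equivalence, so the only outstanding statement is $G = \langle G_v, G_w\rangle$ for an edge $\{v,w\}$. I would set $H = \langle G_v, G_w\rangle$ and propagate along edges: whenever $H \supseteq G_x$, the transitivity of $G_x$ on $X(x)$ extends the $H$-orbit of $x$ to all of $X(x)$, and conjugation then gives $H \supseteq G_y$ for each $y \in X(x)$. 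Iterating and using connectedness of $X$, the $H$-orbits of $v$ and $w$ cover $U$ and $W$ respectively, and $H = G$ follows from a coset argument using $G$-transitivity on the parts.

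In case~(b), $G_v$ is not transitive on $X(v)$, so by the dichotomy $G$ is not edge-transitive; but $A$ is, forcing $A \ne G$ and so $X$ is $A$-vertex-transitive. Any $A$-element fixing $v$ preserves the parts, so $A_v = G_v$ has at least two orbits on $X(v)$. By $A$-vertex-transitivity the number of $A$-orbits on arcs equals the number of $A_v$-orbits on $X(v)$, and $A$-edge-transitivity caps the arc-orbit count at $2$; both bounds are attained, $A$ is not arc-transitive, and $X$ is half-arc-transitive. For equal orbit sizes, I would observe that absence of arc-transitivity forces the arc-stabilizer to coincide with the edge-stabilizer (since the stabilizer of the edge $\{v,w\}$ then fixes both endpoints), so each of the two arc-orbits has size $|E(X)|$, and $A$-vertex-transitivity then distributes these into $d/2$ arcs from each vertex in each orbit. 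The two $G$-edge-orbits then follow by combining $G$-transitivity on $U$ with the two $G_v$-orbits, giving the representatives $\{v,w\}$ and $\{v,w'\}$ as stated. The main obstacle is the propagation argument for $G = \langle G_v, G_w\rangle$ in case~(a): one must carefully verify that local transitivity yields full containments $H \supseteq G_y$ at each newly reached vertex, not merely enlargements of $H$-orbits.
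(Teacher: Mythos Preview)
Your argument is correct and follows essentially the same route as the paper's proof: the same edge-transitivity-plus-part-preservation step gives local transitivity of $G_v$ on $X(v)$, the same connectedness/conjugation propagation gives $G=\langle G_v,G_w\rangle$, and the same observation $A_v=G_v$ drives case~(b). The only difference is organizational: the paper splits first on whether $X$ is vertex-transitive (and then, in the vertex-transitive case, on arc-transitivity), whereas you split directly on whether $G_v$ is transitive on $X(v)$ via your ``pivotal equivalence''; these lead to the same two cases, and your treatment of~(b) is in fact more explicit than the paper's about why the two $A_v$-orbits have equal size and why $G$ has exactly two edge-orbits.
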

\begin{proof}
Let $A=\Aut(X)$, and let $H=\langle G_v,G_w\rangle$ where $\{v,w\}$ is an edge of $X$.  

Suppose first that $X$ is not vertex-transitive. Then since $X$ is edge-transitive, every vertex lies in the $A$-orbit of just one of $v$ and $w$, and it follows that $G = A$ and hence that $G$ is transitive on each part of $X$. Moreover, if $w_1,w_2\in X(v)$, then by edge-transitivity of $G$ ($=A$), there exists $g\in G$ taking $\{x,w_1\}$ to $\{x,w_2\}$, but $G$ preserves the bipartition of $X$, and so $g$ fixes $v$ and takes $w_1$ to $w_2$.  Hence $G_v$ is transitive on $X(v)$. Next, because $H$ contains $G_v$, it follows that all edges incident to $v$ are in the same orbit of $H$, and hence that $G_{w_1}$ is conjugate to $G_w$ by an element of $H$, so $G_{w_1} \leq H$. Repeating this argument inductively using connectedness shows that $H$ is transitive on $\E(X)$, and then since $H$ contains the edge-stabiliser $G_{\{v,w\}} = G_v \cap G_w$, we find that $H = G$.   Thus (a) holds. 

On the other hand, suppose that $X$ is vertex-transitive. Then $G$ is a subgroup of index $2$ in $A$, and is  transitive on each part of $X$. If also $X$ is arc-transitive, then $G$ is edge-transitive, and the arguments in the previous paragraph may be repeated to prove that (a) holds here as well.  Finally, if $X$ is not arc-transitive, then it is half-arc-transitive, and then $A_v$ has two orbits of the same size on $X(v)$, with the two arcs associated with every edge incident with $v$ lying in different orbits.  But also $A_v=G_v$,  and hence (b) holds.
\end{proof}

\section{Our approach} 
\label{sec:approach}

We now describe our approach to finding all small connected edge-transitive graphs of small order. 
We break this up into two cases, according to whether or not the graph is bipartite, and following that, we illustrate our approach for graphs of order $10$.

\medskip
\noindent 
{\bf Case (a)}: {\bf Non-bipartite edge-transitive graphs}  

In this case, all such graphs are vertex-transitive, by Lemma~\ref{lemma:ETnotVT}. 
We use the standard method described in the Introduction to find all small vertex-transitive graphs and then set aside those which are bipartite or not edge-transitive.  


\medskip
\noindent 
{\bf Case (b)}: {\bf Bipartite edge-transitive graphs}  

In this case, we consider only worthy graphs, since every unworthy example can be constructed as a blow-up of a worthy example, by Lemma~\ref{lemma:blow-ups}. 

Now let $X$ be a worthy bipartite edge-transitive graph, with parts $U$ and $W$, and let $G$ be the subgroup of $\Aut(X)$ preserving the parts $U$ and $W$.  Note that $G$ might not act transitively on $\E(X)$, but we know that $U$ and $W$ are the orbits of $G$ on $\V(X)$ by Lemma~\ref{lemma:automppties}, and that $G$ acts faithfully on each of $U$ and $W$ by Lemma~\ref{worthy->faithful}.  It follows that we can think of $G$ as a transitive permutation group on $U$, with an auxiliary transitive action on $W$.  Also if $u\in U$, then by Lemma~\ref{lemma:automppties} we know that either (a) $G_u$ is transitive on $X(u)$, and $G$ is transitive on $\E(X)$, or (b) $G_u$ has two orbits of the same size on $X(u)$, and $G$ has two orbits on $\E(X)$.

These observations lead us to the following algorithm, for finding all {\em worthy} edge-transitive bipartite connected graphs with parts of sizes $k$ and $m$.

\medskip\smallskip 
\noindent 
{\bf Algorithm:} \\[-16pt]  
\begin{enumerate}
\item Find all transitive permutation groups of degree $k$. \\[-18pt] 
\item For each such group $G$ acting transitively on a set $U$ of size $k$, find all faithful transitive permutation representations of $G$ on a set $W$ of size $m$.  \\[-18pt] 
\item For each such representation of $G$ on $W,$ choose a point $u\in U$, and then \\[-20pt] 
\begin{enumerate}
\item for each orbit ${\cal O}$ of $G_u$ on  $W,$ choose a point $w\in{\cal O}$ and then construct the graph $X$ with vertex-set $U \cup W$ and with edge-set the orbit of  $\{u,w\}$ under $G$ (in its natural induced action on $U \cup W$),
\item  for each pair of equal-sized orbits ${\cal O}$ and ${\cal O}'$ of $G_u$ on $W$, choose points $w\in{\cal O}$ and $w'\in{\cal O'}$ and then construct the graph $Y$ with vertex-set $U \cup W$ and with edge-set the union of the orbits of $\{u,w\}$ and $\{u,w'\}$ under $G$.  \\[-18pt] 
\end{enumerate}
\item Check each graph obtained for connectedness, worthiness and isomorphism with previously found graphs, and also the graphs found in 3(b) for edge-transitivity.  
\end{enumerate}

Finally, to find all connected edge-transitive bipartite graphs with parts of sizes $k$ and~$m$, we construct the $(k/k',m/m')$-blow-ups of all worthy edge-transitive bipartite connected graphs with  parts of sizes $k'$ and $m'$, for each divisor $k'$ of $k$ and each divisor $m'$ of~$m$.  

\bigskip
\noindent\textbf{Worked example: Edge-transitive connected graphs of order $10$}
\medskip

To illustrate our method, we explain how it can be applied to find all edge-transitive graphs of order $10$.  

In case (a), there are $45$ transitive groups of order $10$, and by the standard method, these give rise to $22$ different vertex-transitive graphs of order $10$, given at~\cite{Royle-VTcensus31}, with $18$ being connected.  Of these, just eight are edge-transitive, 
and five are non-bipartite, namely the Petersen graph and its complement, the circulant graph $C(10,\{2,3\})$, the complete graph $K_{10}$, and the graph $K_{10}-5K_2$ obtained from it by removing the edges of a perfect matching.  

In case (b), by swapping the parts $U$ and $W$ of the graph if necessary, we may assume that $k = |U|\leq |W| = m$, and hence the possibilities for the pair $(k,m)$ are $(1,9)$, $(2,8)$, $(3,7)$, $(4,6)$ and $(5,5)$. 
We will also assume that we know the worthy edge-transitive bipartite connected graphs of order less than $10$, listed at~\cite{CV2}.  The unworthy graphs of order $10$ constructible as blow-ups of these are $K_{1,9}$, $K_{2,8}$, $K_{3,7}$, $K_{4,6}$ and $K_{5,5}$.  From this point on we will seek only worthy edge-transitive bipartite examples, using the algorithm given above.  
When  $(k,m) = (1,9)$, $(2,8)$ or $(3,7)$, there is no such graph, because no transitive group of degree at most $3$ has a transitive representation of degree greater than~$6$. 
Hence we need only consider  the cases where $(k,m) = (4,6)$ or $(5,5)$.  

In seeking worthy graphs when $(k,m) = (4,6)$, we take $G$ as a transitive permutation group of degree $4$ 
with a transitive permutation representation of degree $6$.  Clearly the only possibilities are $A_4$ and $S_4$, 
and we must take their natural actions on $U$, and their actions on $W$ as the standard permutation 
representations on cosets of a subgroup of order $2$ (for $A_4$) or of order $4$ (for $S_4$). 

When $G = A_4$, the representation of $G$ on $W$ is unique (up to equivalence), 
and the stabiliser of a point in $U$ has two orbits of length $3$ on $W$.  Taking either one of these orbits 
give a worthy (but not vertex-transitive) connected graph $X$ with $12$ edges, such that the vertices 
in $U$ and $W$ have valencies $3$ and $2$, respectively.  The two graphs obtained in this way are isomorphic, and are 
given by ET10.2 and ETB10.2 in the lists in~\cite{CV1} and~\cite{CV2}.   Taking both orbits gives the graph $K_{4,6}$, which is unworthy and so can be ignored. 

On the other hand, when $G = S_4$ there are three possibilities for the representation of $G$ on $W$, 
because $S_4$ has three conjugacy classes of subgroups of order $4$.  
The representation on cosets of the subgroup $V_4$ is unfaithful (and gives the unworthy graph $K_{4,6}$), so can be ignored.  Similarly, for the representation on cosets of a cyclic subgroup of order $4$ can be discarded, because the stabiliser of a point in $U$ has a single orbit of length $6$ on $W$, and again gives the unworthy graph $K_{4,6}$. 
For the representation on cosets of the subgroup generated by $(1,2)$ and $(1,2)(3,4)$, again there 
are two orbits of length $3$ on $W$, and for both of them, the same graph arises as the one for $G = A_4$. 
Hence there is just one worthy edge-transitive bipartite connected graph with parts of sizes $4$ and $6$. 

Next, in the case where $(k,m) = (5,5)$, the permutation group $G$ can be $C_5$, $D_5$, 
$C_5 \rtimes_2 C_4$ (as a Frobenius group), $A_5$ or $S_5$. 
The first possibility  $G = C_5$ gives only the disconnected graph $5K_2$. 
For the second, where $G = D_5$, up to equivalence there is just one transitive action 
of degree $m = 5$, and for that, the stabiliser of a point in $U$ has three orbits on $W$, 
of lengths $1$, $2$ and $2$.  The first orbit gives $5K_2$ again, while each of the second and third gives the 
cycle graph $C_{10}$, which is connected, worthy and vertex-transitive (indeed arc-transitive), 
and the union of those two orbits gives the graph $K_{5,5}-5K_2$ (obtainable from $K_{5,5}$ by removing the edges of a perfect matching), which is also connected, worthy and arc-transitive.  
When $G$ is the Frobenius group $C_5 \rtimes_2 C_4$, again there is just one transitive action 
of degree $m = 5$, and for this representation, the stabiliser of a point in $U$ has two orbits on $W$, 
of lengths $1$ and $4$, and the first gives $5K_2$ again, while the second gives $K_{5,5}-5K_2$ again. 
The same this happens also for $G = A_5$ and $S_5$.
Hence there are just two worthy edge-transitive bipartite connected graphs with both parts of sizes $5$.   

In summary, there are $13$ edge-transitive graphs of order $10$, with five being vertex-transitive and non-bipartite, 
five being bipartite and unworthy, and three being  bipartite and worthy.  Also none of these graphs is half-arc-transitive.

\section{Implementation}\label{sec:implementation}
We implemented our approach from the previous section using the {\sc Magma} system \cite{Magma}.  This allowed us to determine all connected non-bipartite edge-transitive graphs of order up to $47$. Once again, order $47$ is a hard limit at this point in time, because the transitive groups of degree $48$ have not yet been determined. 

The other matters about this implementation concern only the bipartite case, and we give relevant details for each step of the algorithm below:
\\[-22pt]
\begin{enumerate}
\item We used the library of transitive groups of degree at most $47$ in {\sc Magma}; the most recent part of this comes from~\cite{Holt}.
\\[-18pt]
\item  To find all faithful transitive permutation representations of the group $G$ on the part of size $m$, we find  all core-free subgroups of index $m$ in $G$. In some cases, this is very computationally intensive, and the following observation can be very helpful:
\\[+4pt] 
Suppose that $G$ has a normal Hall subgroup $N$ (that is, with order $|N|$ coprime to its index $|G/N|$).  
If $H$ is a subgroup of index $m$ in $G$, then the image of $H$ in $G/N$ is a subgroup 
of index $d = \gcd(m,|G\!:\!N|)$, 
as is its pre-image $J$ in $G$, and $H$ is then a subgroup of index $m/d$ in $J$. 
This means we can find all possibilities for $H$ by first looking for index $d$  
subgroups in $G$, and then for index $m/d$ subgroups of those.  
\\[-18pt]
\item  In step 3(a) we must have $\langle G_u,G_w\rangle = G$, by part (a) of Lemma~\ref{lemma:automppties}. This condition can be checked early and quickly, before constructing the graph $X$.
\\[-18pt]
\item Note that a graph that results from step 3 might not be connected or worthy, so these properties have to be checked at the end. On the other hand, it will certainly be bipartite and, if constructed in (3a), edge-transitive.
\end{enumerate}

With this approach, the computations to find all connected worthy bipartite edge-transitive graphs of order up to $23$ took only minutes, and less than three hours for those of order up to $47$. 

For larger graphs, the computations were much longer, owing to the number of groups to consider: there are  $25000$, $2801324$, $121279$ and $315842$ 
transitive permutation groups of degrees $24$, $32$, $36$ and $40$, respectively. In many cases, we can simply swap the role of $k$ and $m$, without loss of generality. Together with the tricks above, this allowed us to deal with all cases with $k+m\leq 63$, the bottleneck to further progress being the case $(k,m) = (32,32)$. This case seems out of reach of our methods at the moment.

In the end, we found all connected bipartite edge-transitive graphs of order at most $63$; see~\cite{CV2}. When combined with the non-bipartite ones, these gave us all connected edge-transitive graphs of order at most $47$; see~\cite{CV1}. To go further than this with our approach, one would need to know the transitive permutation groups of degree $48$. 

\section{Summary of our results} 
\label{sec:results}


There are $1894$ non-isomorphic edge-transitive  connected graphs of order up to $47$.  
Of these, $1429$ are bipartite while $465$ are non-bipartite, 
and $625$ are worthy while $1269$ are unworthy, and 
$678$ are vertex-transitive while $1216$ are not, and of the $678$ vertex-transitive graphs, 
$670$ are arc-transitive while $8$ are half-arc-transitive. 
Similarly, there are $3309$ bipartite edge-transitive connected graphs of order up to $63$, 
of which $792$ are worthy while $2517$ are unworthy, 
and $435$ are vertex-transitive while $2874$ are not.   

A more detailed breakdown  is 
given in Table~\ref{ETgraphdatatable} below, with `Tot' indicating the total number of such 
graphs of order $n$, and then `Reg', `Bpte', `VT', `AT' and `Wthy' indicating the number of those 
that are regular, bipartite, vertex-transitive, arc-transitive and worthy, respectively. 

Other information can be obtained directly from the lists at~\cite{CV1} and~\cite{CV2}, 
or from the first author on request.

\begin{table}[ht] 
\label{ETgraphdatatable} 
\caption{Summary data for connected edge-transitive graphs of  order $n\leq 47$} 
\noindent
\begin{center}
\begin{tabular}{||c|c|c|c|c|c|c||c|c|c|c|c|c|c||}
\hline\hline
$n$ & Tot & Reg & Bpte & VT & AT & Wthy & $n$ & Tot & Reg & Bpte & VT & AT & Wthy \\
\hline\hline
1 & 1 & 1 & 0 & 1 & 1 & 1 & 25 & 34 & 11 & 23 & 11 & 11 & 12 \\
\hline 
2 & 1 & 1 & 1 & 1 & 1 & 1 & 26 & 31 & 13 & 26 & 13 & 13 & 10 \\
\hline 
3 & 2 & 1 & 1 & 1 & 1 & 1 & 27 & 51 & 21 & 30 & 21 & 20 & 21 \\
\hline 
4 & 3 & 2 & 2 & 2 & 2 & 1 & 28 & 64 & 27 & 47 & 26 & 26 & 25 \\
\hline 
5 & 4 & 2 & 2 & 2 & 2 & 2 & 29 & 18 & 4 & 14 & 4 & 4 & 4 \\
\hline 
6 & 6 & 4 & 4 & 4 & 4 & 2 & 30 & 93 & 41 & 66 & 41 & 41 & 30 \\
\hline 
7 & 5 & 2 & 3 & 2 & 2 & 2 & 31 & 19 & 4 & 15 & 4 & 4 & 4 \\
\hline 
8 & 8 & 5 & 6 & 5 & 5 & 3 & 32 & 83 & 45 & 65 & 42 & 42 & 32 \\
\hline 
9 & 9 & 4 & 5 & 4 & 4 & 3 & 33 & 44 & 8 & 36 & 8 & 8 & 8 \\
\hline 
10 & 13 & 8 & 8 & 8 & 8 & 6 & 34 & 34 & 10 & 29 & 10 & 10 & 7 \\
\hline 
11 & 7 & 2 & 5 & 2 & 2 & 2 & 35 & 67 & 15 & 52 & 15 & 15 & 19 \\
\hline 
12 & 19 & 11 & 12 & 11 & 11 & 6 & 36 & 154 & 75 & 107 & 69 & 67 & 67 \\
\hline 
13 & 10 & 4 & 6 & 4 & 4 & 4 & 37 & 24 & 6 & 18 & 6 & 6 & 6 \\
\hline 
14 & 16 & 8 & 13 & 8 & 8 & 6 & 38 & 36 & 10 & 32 & 10 & 10 & 6 \\
\hline 
15 & 25 & 10 & 15 & 10 & 10 & 11 & 39 & 60 & 14 & 46 & 14 & 12 & 14 \\
\hline 
16 & 26 & 15 & 18 & 15 & 15 & 11 & 40 & 175 & 79 & 132 & 71 & 68 & 71 \\
\hline 
17 & 12 & 4 & 8 & 4 & 4 & 4 & 41 & 26 & 6 & 20 & 6 & 6 & 6 \\
\hline 
18 & 28 & 14 & 21 & 14 & 14 & 8 & 42 & 147 & 58 & 114 & 56 & 56 & 55 \\
\hline 
19 & 12 & 3 & 9 & 3 & 3 & 3 & 43 & 25 & 4 & 21 & 4 & 4 & 4 \\
\hline 
20 & 43 & 24 & 29 & 22 & 22 & 15 & 44 & 88 & 17 & 80 & 16 & 16 & 17 \\
\hline 
21 & 37 & 13 & 24 & 13 & 13 & 15 & 45 & 161 & 42 & 119 & 42 & 42 & 59 \\
\hline 
22 & 24 & 8 & 21 & 8 & 8 & 7 & 46 & 46 & 7 & 43 & 7 & 7 & 7 \\
\hline 
23 & 13 & 2 & 11 & 2 & 2 & 2 & 47 & 25 & 2 & 23 & 2 & 2 & 2 \\
\hline 
24 & 65 & 36 & 47 & 34 & 34 & 23 & All & 1894 & 703 & 1429 & 678 & 670 & 625 \\
\hline 
\hline
\end{tabular}
\end{center}
\end{table}

\section{Answers to Folkman's questions} 
\label{sec:Folkman}

In a seminal paper \cite{Folkman}, Folkman investigated regular graphs that are edge-transitive 
but not vertex-transitive, and asked eight questions at the end about such graphs, 
which he called `admissible' (but are now known as semi-symmetric).  

Two of these questions (4.1 and 4.8 in \cite[Section 4]{Folkman}) were general ones about 
the orders and valencies of semi-symmetric graphs, and remain open (and might never be answered). 

Another three of them (4.5 to 4.7) were about the existence of semi-symmetric graphs 
of order $2n$ and valency $d$ where $d$ is prime, or $d$ is coprime to $n$, or $d$ is a prime 
that does not divide $n$, and these have been answered by the construction of various 
semi-symmetric 3-valent graphs (including examples of orders 110 and 112).  
Question 4.2 asked if there is a semi-symmetric graph of order 30, 
and this was answered in 1987 by Ivanov, who proved in \cite{Ivanov} that no such graph exists, 
and the results of our computations confirm this. 
Question 4.3 asked if there a semi-symmetric graph of order $2pq$, where $p$ and $q$ 
are odd primes such that $p < q$, and $p$ does not divide $q-1$, and this was answered in 2000 by Du and Xu, 
who determined in \cite{DuXu} all semi-symmetric graphs of order $2pq$ where $p$ and $q$ 
are distinct primes; these included graphs of orders $70$ ($= 2 \cdot 3 \cdot 5$), 
$154$ ($= 2 \cdot 7 \cdot 11$)  and $3782$ ($= 2 \cdot 31 \cdot 61$).  
 
The remaining question (4.4) asked if there exists a semi-symmetric graph of order $2n$ 
and valency $d$ where $d \ge n/2$.  As far as we know, this question has remained unanswered since 1967, 
because until now, the valency of most known semi-symmetric graphs is relatively small.  
(In contrast, the valency of symmetric bipartite graphs can be relatively large; 
indeed the complete bipartite graph $K_{n,n}$ is symmetric and has valency $n$, for all $n$.) 

Our computations answer Folkman's question positively.  
For example, the graphs ET20.15, ET24.20 and  ET36.80 
have the required property, with valencies $6$ ($= 3n/5$), $6$ ($= n/2$) and $12$ ($= 2n/3$) respectively. 
In fact all of these graphs are blow-ups of smaller non-regular edge-transitive graphs, and they provide the idea 
behind the construction in the proof of the following theorem, which takes Folkman's question much further 
by showing that the ratio $d/n$ can be arbitrarily close to $1$.  

\begin{theorem}
\label{thm:Folkman4.4} 
For every integer $k \ge 3$, there exists a semi-symmetric graph of order $4k^2$ 
with valency $2k(k\!-\!1)$, giving the valency to part-size ratio $d/n$ as $(k\!-\!1)/k$.  
Hence in particular, the valency to part-size ratio tends to $1$ as $k \to \infty$. 
\end{theorem}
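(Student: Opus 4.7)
The plan is to construct the desired graph $X$ as the $(k,2)$-blow-up of a carefully chosen worthy bipartite edge-transitive graph $Y$ whose two parts have unequal sizes. The difference in part-sizes in $Y$ will translate into differing $\sim$-equivalence class sizes in $X$, which is what will prevent a side-swapping automorphism and thereby ensure that $X$, though edge-transitive and regular, fails to be vertex-transitive.

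For the base graph, I would take $H = K_{k,k}$ with bipartition $\V(H) = A \sqcup B$ where $|A| = |B| = k$, and define $Y$ to be the bipartite graph with parts $\V(H)$ (of size $2k$) and $\E(H)$ (of size $k^2$), declaring $v \in \V(H)$ and $e \in \E(H)$ to be adjacent in $Y$ exactly when $v \notin e$. Counting edges of $H$ avoiding a fixed vertex, or vertices of $H$ avoiding the two endpoints of a fixed edge, gives $Y$-valency $k(k-1)$ on the $\V(H)$-side and $2(k-1)$ on the $\E(H)$-side. The group $S_k \wr S_2$ acts naturally on $\V(H)$ (and hence on $\E(H)$), transitively on non-incident vertex-edge pairs, which yields edge-transitivity of $Y$. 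Worthiness of $Y$ follows from two observations: distinct vertices of $\V(H)$ have distinct stars in $H$ (because $K_{k,k}$ has no isolated edge), so they have distinct $Y$-neighbourhoods; and distinct edges of $H$ are distinct $2$-subsets of $\V(H)$, so they too have distinct $Y$-neighbourhoods.

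Next, taking $X$ to be the $(k,2)$-blow-up of $Y$, I would verify directly that both parts have size $2k \cdot k = 2k^2$ (so the total order is $4k^2$) and that the two valencies $k(k-1) \cdot 2$ and $2(k-1) \cdot k$ coincide at $2k(k-1)$, making $X$ regular of the required valency. Since $Y$ is worthy, $X/{\sim}$ coincides with $Y$, so the equivalence ``$X/{\sim}$ edge-transitive iff $X$ edge-transitive'' from Section~\ref{sec:prelims} delivers edge-transitivity of $X$ for free.

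The main thing left will be showing that $X$ is not vertex-transitive. Here the key point is that, because $Y$ is worthy, the $\sim$-equivalence classes of $X$ are exactly the blow-up classes: of size $k$ on the $\V(H)$-side and of size $2$ on the $\E(H)$-side. Any automorphism of $X$ preserves the partition into $\sim$-classes, and the hypothesis $k \geq 3$ ensures $k \neq 2$, so no automorphism can interchange the two parts; hence $X$ is not vertex-transitive. Combined with the edge-transitivity and regularity already established, this shows $X$ is semi-symmetric with the claimed valency-to-part-size ratio $(k-1)/k$. This class-size inequality is the only place where $k \geq 3$ is used, and it is also the main subtlety of the argument: for $k = 2$ the two blow-up class sizes coincide and the analogous construction produces a vertex-transitive graph instead.
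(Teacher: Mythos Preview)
Your proof is correct and follows essentially the same route as the paper. Your base graph $Y$---the vertex/edge non-incidence graph of $K_{k,k}$---is precisely the paper's graph on parts $A_1\cup A_2$ and $A_1\times A_2$, described in different language; the paper then takes the same $(k,2)$-blow-up and uses the same $\sim$-class-size disparity ($k$ versus $2$) to rule out vertex-transitivity. Your version is slightly more explicit in verifying worthiness of the base graph, which the paper uses implicitly when asserting the sizes of the $\sim$-classes in the blow-up.
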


\begin{proof}
For every integer $k \ge 3$, let $A$ be the union of two disjoint sets $A_1$ and $A_2$ of size $k$, 
and let $B = A_1 \times A_2$, and make $A$ and $B$ the parts of a bipartite graph $X$ 
in which the edges join each vertex $(a_1,a_2) \in B$ to every vertex $a \in A_1\setminus \{a_1\}$
and to every vertex $a \in A_2 \setminus \{a_2\}$.  Then $X$ has $2k\!+\!k^2$ vertices and $2k^2(k\!-\!1)$ edges, with each vertex $a \in A$ having valency $k(k\!-\!1)$ 
and each vertex $b \in B$ having valency $2(k\!-\!1)$. Also $X$ admits a natural action of the wreath product $S_k \wr C_2$ as a group of automorphism (and in fact $\Aut(X)=S_k \wr C_2$, but we will not need this), so $X$ is edge-transitive.  

Now take the $(k,2)$ blow-up of $X$.  The result is an edge-transitive regular bipartite graph $Y$ 
of order $2n = 4k^2$ and valency $d = 2k(k\!-\!1)$.  But $Y$ is not vertex-transitive, since every vertex 
in the blow-up of $B$ has the same neighbourhood as just one other vertex, while every vertex in the blow-up 
of $A$ has the same neighbourhood as exactly $k\!-\!1$ other vertices. 
Hence the graph $Y$ is semi-symmetric, as claimed.   The rest follows easily. 
\end{proof}

The smallest example in this family is the graph ET36.80 (mentioned earlier).  

It can be observed, however, that the semi-symmetric graphs in Construction 1 are unworthy, 
as indeed are all the other examples of order less than 63. 
We are grateful to Primo{\v z} Poto{\v c}nik, who asked us if Folkman's 
question can also be answered positively for semi-symmetric graphs that are worthy, 
and now show that to be so.  We are also grateful to Michael Giudici, who helped us 
identify the first small example that we found, which led to the construction used in the following. 

\begin{theorem}
\label{thm:Folkman4.4q} 
For every odd prime-power $q$, there exists a worthy semi-symmetric graph of order $2(q^{3}+q^{2}+q+1)$ 
with valency $q^{3}+q^{2}$, giving the valency to part-size ratio $d/n$ as $(q^{3}+q^{2})/(q^{3}+q^{2}+q+1)$.  
In particular, this ratio tends to $1$ as $q \to \infty$. 
\end{theorem}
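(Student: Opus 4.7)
My plan is to construct $Y$ as the bipartite complement of the incidence graph of the symplectic generalized quadrangle $W(q)$, for $q$ an odd prime power. Recall that $W(q)$ is the generalized quadrangle of order $(q,q)$ whose points are the projective points of $PG(3,q)$ and whose lines are the totally isotropic $2$-dimensional subspaces with respect to a non-degenerate alternating form. There are $(q+1)(q^2+1) = q^3+q^2+q+1$ points and equally many lines, with each point on $q+1$ lines and each line containing $q+1$ points. Let $Y$ be the bipartite graph whose two parts are the point-set and line-set of $W(q)$, with a point $P$ adjacent to a line $L$ if and only if $P$ is \emph{not} on $L$.

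By construction $Y$ has $2(q^3+q^2+q+1)$ vertices, and by a direct count each vertex of $Y$ has valency $(q+1)(q^2+1)-(q+1) = q^2(q+1) = q^3+q^2$, so $Y$ is regular of the required valency. Worthiness follows from the defining GQ axiom that two distinct points lie on at most one common line: distinct points of $W(q)$ must therefore have distinct sets of incident lines, and hence distinct sets of non-incident lines; the dual argument applies to lines.

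For edge-transitivity, I would appeal to the action of $G = \PSp(4,q)$ on $W(q)$, which preserves the bipartition and incidence and therefore also non-incidence. The incidence graph of $W(q)$ is distance-regular of diameter $4$, and the Moufang property ensures $G$ is transitive on pairs of opposite-type vertices at each distance; in particular, $G$ is transitive on the set of anti-flags (non-incident point-line pairs), which is precisely the edge-set of $Y$. Alternatively, transitivity on anti-flags can be established directly by Witt's theorem applied to the configuration consisting of a $1$-space $P$, a Lagrangian $L$, and the unique $1$-space $P^\perp\cap L$.

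To rule out vertex-transitivity, observe that an automorphism of $Y$ swapping its two parts would restrict to a bijection between points and lines of $W(q)$ preserving non-incidence, hence a duality of the incidence structure. A classical theorem (see e.g.\ Payne and Thas, \emph{Finite Generalized Quadrangles}) asserts that $W(q)$ is self-dual if and only if $q$ is even, so for odd $q$ no such duality exists, and $Y$ is not vertex-transitive. Combined with regularity and edge-transitivity, this makes $Y$ semi-symmetric. The ratio $(q^3+q^2)/(q^3+q^2+q+1) = q^2/(q^2+1)$ then plainly tends to $1$ as $q \to \infty$. The only non-trivial input is the non-self-duality of $W(q)$ for odd $q$, which is the main obstacle to package cleanly; everything else reduces to elementary GQ axioms and standard transitivity properties of the symplectic group.
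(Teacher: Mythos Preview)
Your proof is correct and follows essentially the same route as the paper: both take the bipartite complement of the incidence graph of the symplectic generalised quadrangle $W(q)$, use non-self-duality of $W(q)$ for odd $q$ to rule out vertex-transitivity, and deduce edge-transitivity from transitivity on anti-flags. The only minor difference is that you establish worthiness directly from the GQ axiom (distinct points share at most one line), whereas the paper instead invokes primitivity of $\Aut(\PSp(4,q))$ on each part; both arguments are valid and short.
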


\begin{proof}
For every such $q$, let $Q$ be the generalised quadrangle 
associated with a symplectic form on $V = {\mathbb F}_q^4$ 
(such as $\,\langle {\bf x}, {\bf y} \rangle = x_1y_3 + x_2y_4 - x_3y_1 - x_4y_2).\,$    
Then $Q$ has $q^{3}+q^{2}+q+1$ points (which are the 1-dimensional subspaces of~$V$) 
and $q^{3}+q^{2}+q+1$ totally isotropic lines  (which are the 2-dimensional subspaces $U$ of $V$ 
for which $U = U^{\perp}$), and point-line incidence is given by natural inclusion.   
In fact every point lies on $q+1$  totally isotropic lines, and every totally isotropic line contains $q+1$ points. 
This geometry is flag-transitive, but not self-dual (by a theorem of Benson \cite{B}), 
and hence the Levi (incidence) graph of $Q$ is regular with valency $q+1$, and is edge-transitive (indeed locally arc-transitive), but not vertex-transitive. 
Moreover, its automorphism group is isomorphic to $\Aut(\PSp(4,q))$, 
and acts primitively on both parts of the graph.  

Now take the bipartite complement of this Levi graph, in which each point is joined to each of the 
totally isotropic lines that do {\em not\/} contain it.  
The resulting graph is regular with valency $q^3+q^2$, and is edge-transitive but not vertex-transitive, 
and its automorphism group is also isomorphic to $\Aut(\PSp(4,q))$. 
In particular, this graph is semi-symmetric, but also worthy, since its automorphism group 
acts primitively on each of the two parts of the graph. 
Finally, for this graph the ratio $d/n$ is $(q^{3}+q^{2})/(q^{3}+q^{2}+q+1)$, which tends to $1$ as $q \to \infty$. 
\end{proof}

\medskip
\bigskip

\noindent
{\Large\bf Acknowledgements}
\medskip

The authors acknowledge the extensive use of the {\sc Magma} system \cite{Magma} 
in testing and carrying out the computations described in this paper.  
Also the first author is grateful to the N.Z.\ Marsden Fund for its support 
(via grant UOA1626). 



\end{document}